\documentclass [11pt,oneside]{amsart}
\pagestyle{plain}
\usepackage{amsmath}

\setlength\footskip{0.2in} \setlength\topmargin{0.5in}
\setlength\headheight{4in} \setlength\headsep{0.5in}
\setlength\textheight{9.4in} \setlength\textwidth{6.4in}
\setlength\oddsidemargin{-0in} \setlength\evensidemargin{-0in}

\linespread{1.28}
\usepackage {amsfonts}
\usepackage {amsmath}
\usepackage {amsthm}
\usepackage {amssymb}
\usepackage {framed}
\usepackage {amsxtra}
\usepackage {enumerate}
\usepackage {graphicx}
\usepackage{color}
\usepackage{graphicx}
\usepackage{graphicx}
\usepackage{wrapfig}
\usepackage{lscape}
\usepackage{rotating}
\usepackage{epstopdf}
\usepackage{pdflscape}
\usepackage{setspace}
\usepackage[left=0.86in, right=.86 in, top=1.2 in, footskip=0.5 in, bottom=0.9 in]{geometry}
\usepackage{adjustbox}
\makeatletter
\usepackage{empheq}

\newcommand*\widefbox[1]{\fbox{\hspace{2em}#1\hspace{2em}}}

\theoremstyle{definition}
\newtheorem{df}{Definition} [section]

\theoremstyle{plain}
\newtheorem{thm}[df]{Theorem}

\newtheorem{lemma}[df]{Lemma}

\newtheorem{obs}[df]{Observation}
\newtheorem{problem}[df]{Problem}

\title{Dense Packings with Nonparallel Cylinders}
\author{Dan Ismailescu and Piotr Laskawiec, Hofstra University}
\begin{document}

\begin{abstract}
A \emph{cylinder packing} is a family of congruent infinite circular cylinders with mutually disjoint interiors in $3$-dimensional Euclidean space.
The \emph{local density} of a cylinder packing is the ratio between the volume occupied by the cylinders within a given sphere and the volume of the entire sphere.
The \emph{global density} of the cylinder packing is obtained by letting the radius of the sphere approach infinity.

It is known that the greatest global density is obtained when all cylinders are parallel to each other and each cylinder is surrounded by exactly six others. In this case, the global density of the cylinder packing equals $\pi/\sqrt{12}= 0.90689\ldots$.
The question is how large a density can a cylinder packing have if one imposes the restriction that \emph{no two cylinders are parallel}.

In this paper we prove two results. First, we show that there exist cylinder packings with no two cylinders parallel to each other, whose local density is arbitrarily close
to the local density of a packing with parallel cylinders. Second, we construct a cylinder packing with no two cylinders parallel to each other whose global density is $1/2$.
This improves the results of K. Kuperberg, C. Graf and P. Paukowisch.

\end{abstract}
\maketitle
\pagenumbering{arabic}


\section{\bf Introduction}

Let $l$ be a line in $3$-dimensional space, and let $r$ be a positive real number. The set of points in $\mathbf{R}^3$ whose distance from $l$ is
not greater than $r$ is called the \emph{infinite circular cylinder} (or just the \emph{cylinder}, for short) of \emph{axis} $l$ and \emph{radius} $r$.
Two cylinders are \emph{parallel} if their axes are parallel. A collection of congruent cylinders whose interiors are mutually disjoint is called a
\emph{cylinder packing}.

Given a cylinder packing, 
$\mathcal{C}=\{C_i\}_{i=1}^{\infty}$,
one may inquire what portion of the space is occupied by the cylinders in $\mathcal{C}$.
This question naturally leads to the concepts of local density and global density of $\mathcal{C}$ defined below.

Let $B(R)$ be the ball of radius $R$ centered at the origin. Then the \emph{local density} of $\mathcal{C}$ with respect to $B(R)$, denoted $\delta(\mathcal{C},R)$, is
the ratio between the volume within $B(R)$ that is covered by the cylinders, and the volume of $B(R)$. More formally,
\begin{equation*}
\delta(\mathcal{C},R)=\frac{\sum\limits_{i=1}^{\infty}Vol(C_i \cap B(R))}{4\pi R^3/3}.
\end{equation*}
The \emph{global density} of $\mathcal{C}$, denoted $\delta(\mathcal{C})$, is simply
\begin{equation*}
\delta(\mathcal{C})=\lim_{R\rightarrow \infty}\delta(\mathcal{C},R),
\end{equation*}
provided the limit exits.

One may ask what is the greatest possible value of $\delta(\mathcal{C})$. In lay terms, what is the most efficient way to pack congruent cylinders in space?

Under the restriction that all cylinders are parallel, the problem is equivalent to finding the maximum density of a packing of the plane with congruent circles.
One can easily see this by writing $\mathbf{R}^3$ as the union of the planes perpendicular to the cylinders' axes.

The planar problem was solved by Thue \cite{AT}, who proved that the most efficient way to pack congruent circles in the plane is to arrange them such that each circle
is tangent to six others - see figure \ref{figthue}.
\vspace{-0.3cm}
\begin{figure}[ht]
\centering
\includegraphics[width=0.45\linewidth]{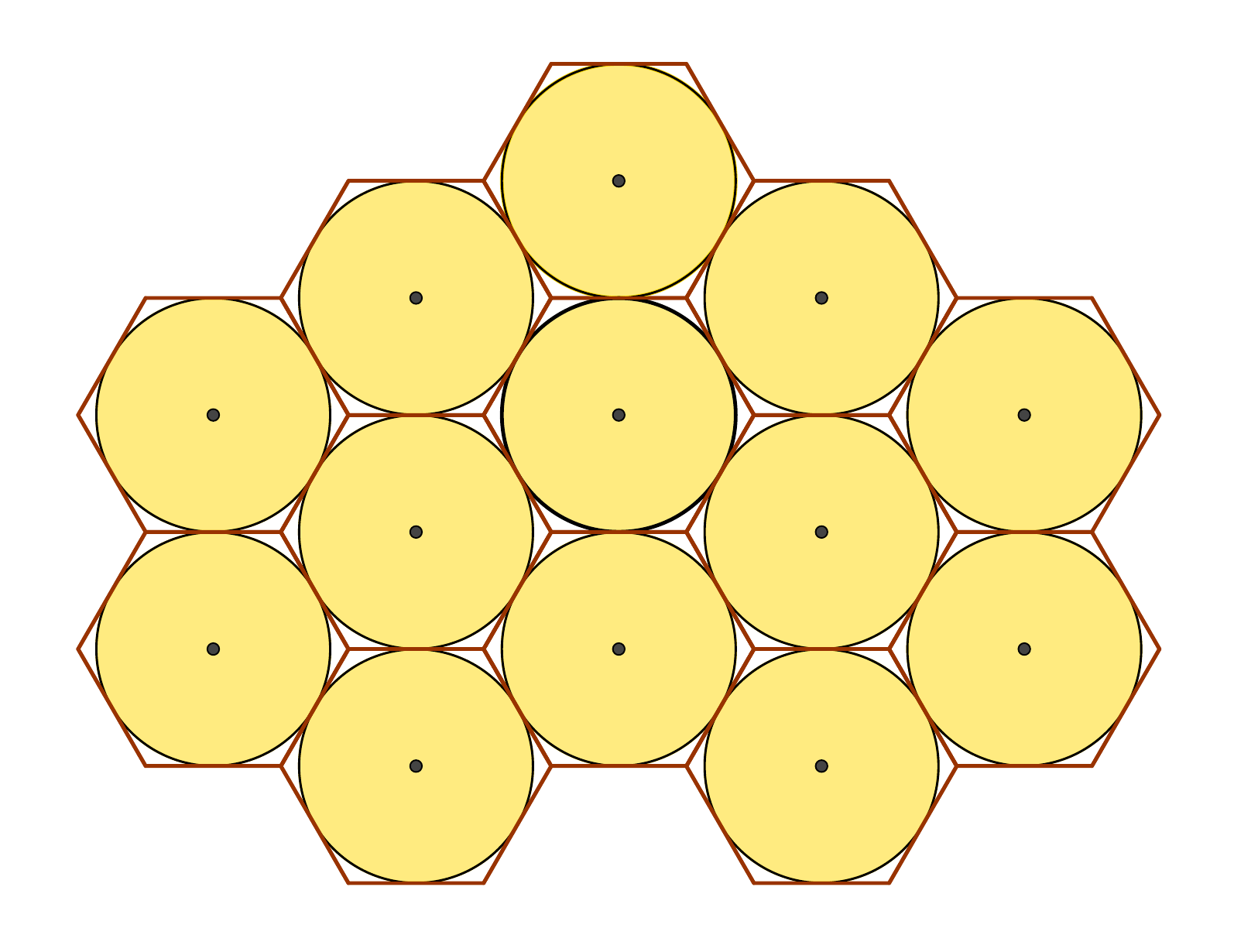}
\vspace{-0.4cm}
\caption{The densest packing of congruent circles in the plane has density $\pi/\sqrt{12}$}
\label{figthue}
\end{figure}

The density of such arrangement is given by the ratio between the area of a circle and the area of the regular circumscribed hexagon, and it equals $\pi/\sqrt{12}=0.90689\ldots$.

It follows that if all cylinders are parallel, then $\delta(\mathcal{C})\le \pi/\sqrt{12}$.

Can one do better? This disarmingly simple question remained unsolved until 1990 when A. Bezdek and W. Kuperberg proved in \cite{BK} that the answer is negative.

\begin{thm}\cite{BK} The maximum global density of any space packing with congruent cylinders is $\pi/\sqrt{12}$.
\end{thm}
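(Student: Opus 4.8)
The plan is to establish the upper bound $\limsup_{R\to\infty}\delta(\mathcal{C},R)\le \pi/\sqrt{12}$ for an arbitrary cylinder packing $\mathcal{C}$; since the parallel hexagonal arrangement already realises the value $\pi/\sqrt{12}$ exactly (as recalled above), this yields the theorem. Normalise so that every cylinder has radius $1$, so that the axes $\ell_i$ are pairwise at distance at least $2$.

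I would organise everything around a space-filling decomposition adapted to the axes. Let
\[
V_i=\{\,x\in\mathbf{R}^3 : \operatorname{dist}(x,\ell_i)\le \operatorname{dist}(x,\ell_j)\ \text{for all}\ j\,\}
\]
be the Voronoi region of the line $\ell_i$ inside the family $\{\ell_j\}_j$. These cells tile $\mathbf{R}^3$, and $C_i\subseteq V_i$: if $\operatorname{dist}(x,\ell_i)\le 1$ then $\operatorname{dist}(x,\ell_j)\ge \operatorname{dist}(\ell_i,\ell_j)-\operatorname{dist}(x,\ell_i)\ge 2-1\ge \operatorname{dist}(x,\ell_i)$ for every $j$. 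Since the $C_i$ are disjoint, this gives
\[
\delta(\mathcal{C},R)=\frac{\sum_i \operatorname{Vol}(C_i\cap B(R))}{\operatorname{Vol}(B(R))}=\frac{\sum_i \operatorname{Vol}\!\big(C_i\cap V_i\cap B(R)\big)}{\sum_i \operatorname{Vol}\!\big(V_i\cap B(R)\big)},
\]
that is, $\delta(\mathcal{C},R)$ is a weighted average of the individual cell densities. Consequently the theorem reduces to the local statement that \emph{no single cell can be filled more densely than in the hexagonal configuration}: one wants $\operatorname{Vol}(C_i\cap V_i)\le (\pi/\sqrt{12})\operatorname{Vol}(V_i)$, uniformly in $i$ and in the appropriate asymptotic sense, after which a routine estimate disposing of the cells that meet $\partial B(R)$ completes the passage to the limit.

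The heart of the matter is this per-cell estimate, which says in effect that one cannot crowd mutually-distant lines around $\ell_i$ more efficiently than the six parallel lines of the hexagonal packing do. I would attack it by slicing: put $\ell_i$ on the $z$-axis; for a horizontal plane $\Pi_t=\{z=t\}$ one has $\operatorname{dist}(x,\ell_i)=|x-(0,0,t)|$ for $x\in\Pi_t$, so $V_i\cap\Pi_t=\{x\in\Pi_t:|x-(0,0,t)|\le\operatorname{dist}(x,\ell_j)\ \forall j\}$ always contains the unit disc centred at $(0,0,t)$, which is exactly $C_i\cap\Pi_t$. By Fubini, $\operatorname{Vol}(C_i\cap V_i\cap B(R))/\operatorname{Vol}(V_i\cap B(R))$ is a weighted average over $t$ of the planar densities $\pi/\operatorname{Area}(V_i\cap\Pi_t)$, so the goal becomes $\operatorname{Area}(V_i\cap\Pi_t)\ge 2\sqrt{3}$ \emph{on average in $t$}. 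I would then split the other cylinders according to the angle $\alpha_j$ their axes make with $\ell_i$. When $\alpha_j$ is small, the cross-sections $C_j\cap\Pi_t$ are near-round ellipses with pairwise distances at least $2$, so $C_i$ together with these behaves like a planar packing of near-unit discs and Thue's theorem forces their joint constraints to leave $V_i\cap\Pi_t$ with area at least $2\sqrt{3}-o(1)$.

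\textbf{The main obstacle} is the transverse cylinders. A single neighbour far from parallel to $\ell_i$ can slice a given plane $\Pi_t$ in a long thin ellipse, and a whole family of them can make one particular slice very dense, so the bound can hold only after averaging $t$ over a long interval — equivalently, after enlarging $V_i$ to a long coaxial cylinder of radius $\rho$ and putting its volume $\sim \pi\rho^2 H$ in the denominator. The crux is thus to show that the packing condition prevents the transverse neighbours from being dense along the \emph{entire} length of $\ell_i$ simultaneously, so that their aggregate effect on the average cross-sectional area is negligible. I expect a plain slice-by-slice comparison to be insufficient here, and that one needs the enlargement / adapted-region device of A.\ Bezdek and W.\ Kuperberg — roughly, associating to each cylinder a suitably thickened coaxial region and exploiting that the thickened regions, cut down in a controlled way, still cannot overlap enough to raise the density past the planar bound — to make the averaging argument rigorous.
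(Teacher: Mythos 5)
First, a point of order: the paper does not prove this statement --- it is quoted verbatim from Bezdek and Kuperberg \cite{BK} as a known result, so there is no in-paper proof to compare your attempt against. Judged on its own, your proposal is a sensible reduction but not a proof. The setup is fine: the Voronoi cells $V_i$ of the axes tile space, $C_i\subseteq V_i$ by the triangle inequality as you say, and the density is a weighted average of the per-cell densities, so everything hinges on showing that (in a suitable averaged sense along the axis) the cross-sectional area of $V_i$ perpendicular to $\ell_i$ is at least $2\sqrt{3}$. But that inequality \emph{is} the theorem; it is precisely the step you label ``the main obstacle'' and then defer to ``the enlargement / adapted-region device of A.~Bezdek and W.~Kuperberg'' without supplying it. A plan that reduces the statement to its hardest lemma and then cites the original authors for that lemma is not a blind proof.

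To make the gap concrete: the slice-by-slice bound genuinely fails, so the averaging you invoke cannot be avoided and is where all the work lives. If a neighbouring axis $\ell_j$ is skew to $\ell_i$ at distance exactly $2$ and ``crosses'' at height $t$, then in the plane $\Pi_t$ the bisector $\{\operatorname{dist}(x,\ell_i)=\operatorname{dist}(x,\ell_j)\}$ is (to leading order) a parabola with focus at the centre and vertex at distance $1$, i.e.\ tangent to the unit disc but curving back toward it on both sides --- it removes strictly more area from $V_i\cap\Pi_t$ than the half-plane cut a parallel neighbour at the same distance would. So a single slice can have area below $2\sqrt{3}$, and one must show that the same neighbour compensates by constraining other slices less (its distance to $\Pi_{t'}$ grows like $|t-t'|\sin\alpha_j$), and moreover that the packing condition prevents many skew neighbours from all crossing near the same height. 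Quantifying that trade-off uniformly over arbitrary configurations of pairwise $2$-separated lines is the actual content of \cite{BK}; until it is carried out, the argument establishes nothing beyond the trivial reduction. Your proposal honestly flags this, but that makes it an outline of where a proof would have to go, not a proof.
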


This result is significant because this was the first example of a solid that does not tile the space, and whose maximum packing density was explicitly determined.

This prompted Moser and Pach to raise the following challenge:
\begin{problem}\cite{MP}
Find nontrivial examples of \underline{bounded} convex bodies in $\mathbf{R}^3$ that are not tiles  but whose maximum packing density can be computed exactly.
\end{problem}
A first answer was provided by Bezdek \cite{B}, who determined the maximum packing density of a rhombic dodecahedron that has one corner removed
so that it no longer tiles the space. In this case, the packing density equals the ratio of the volume of the truncated rhombic dodecahedron to the
volume of the full rhombic dodecahedron.

A few years later, Hales settled a long-standing conjecture of Kepler regarding the maximum packing density of the sphere. Hales showed that this value equals $\pi/\sqrt{18}=0.7404\ldots$. Hales's proof relies heavily on computers, and for this particular reason, a certain level of uncertainty about the validity of the proof
lingered within the mathematical community. In order to remove any such doubts, Hales embarked on a 15-year collaborative project to produce a complete formal proof of the Kepler conjecture. In January 2015, Hales and 21 collaborators submitted a paper titled \emph{A formal proof of the Kepler conjecture} \cite{H} to arXiv, claiming to have proved the conjecture. In 2017, the paper appeared in print in the \emph{Forum of Mathematics} journal.

Returning to the cylinder case, it is not obvious at first glance whether there exists a cylinder packing that does not contain two parallel cylinders but has positive density.
Under the restriction that no two cylinders are parallel, one might expect that the cylinders need so much room to avoid each other that in effect the packing density always
turns out to be zero.

Surprisingly, this is not true. A subtle construction of K. Kuperberg \cite{KK} produces a cylinder packing with density $\pi^2/576=0.017\ldots$, in which no two cylinders are parallel. Her construction was refined by Graf and Paukowitsch \cite{GP}, who obtained a nonparallel cylinder packing with density $5/12=0.4166\ldots$.

In this paper, we prove two results.
First, we show that there exist cylinder packings with no two cylinders parallel, whose local density is arbitrarily close to the local density of a cylinder packing with parallel cylinders. Second, we construct a nonparallel cylinder packing whose global density is $1/2$, thus improving Graf's and Paukowitch's result.

\section{\bf A local result}
For any cylinder packing in space, there exists a plane which intersects the axes of all the cylinders. This is because the number of cylinders in any packing is countable,
and therefore, so is the number of unit vectors representing the directions of their axes. Since the unit sphere cannot be expressed as a countable union of great circles, one can select a unit vector which is not orthogonal to any of these direction vectors. Then, any plane having that vector as its normal vector will have the desired property.

Thus, every cylinder packing can be constructed in the following manner:

\noindent{\bf Step 1.} Start with a point set $\mathcal{A}$ in the $xy$ plane.\\
{\bf Step 2.} Through each point $A\in \mathcal{A}$, construct a line $l\in \mathbf{R}^3$.\\
{\bf Step 3.} Consider the cylinders of radius $r$ whose axes are the lines constructed in step 2. If the distance $\phantom{asfsaf}$ between any two such lines is $\ge 2r$, then the resulting collection of cylinders is a packing.

Let $\mathcal{A}=\{A_1,A_2,\ldots,A_n,\ldots\}$ be a set of points in the $xy$ plane such that the distance between any two distinct points of $\mathcal{A}$ is at least $2r$, where $r>0$ is given. For each $A_i\in \mathcal{A}$, let $l_i^{\perp}$ be the line passing through $A_i$ and perpendicular to the $xy$ plane.

Further, let $C_i^{\perp}(r)$ be the cylinder of radius $r$ whose axis is $l_i^{\perp}$.
Then $\{C_i^{\perp}(r)\}$ is a cylinder packing with all cylinders parallel to each other. For any $R>0$, denote the local density of this packing by
\begin{equation}
\delta^{\perp}(\mathcal{A},r,R)=\frac{\sum\limits_{i=1}^{|\mathcal{A}|}Vol\left(C_i^{\perp}(r) \cap B(R)\right)}{4\pi R^3/3},
\end{equation}
where $B(R)$ is the ball of radius $R$ centered at the origin.

For every $A_i\in \mathcal{A}$, $A\neq O$, let $l_i$ be the image of $l_i^{\perp}$ under a rotation around the axis $OA_i$. Note that irrespective of the rotation angle,
$l_i$ is also perpendicular to $OA_i$. 

For some $0< \rho \le r$, let $C_i(\rho)$ be the cylinder of radius $\rho$ having line $l_i$ as its axis. Under the assumption that the distance
between any two lines $l_i$ and $l_j$ is at least $2\rho$, then $\{C_i(\rho)\}$ is also a cylinder packing.
Denote the local density of this new packing by
\begin{equation}
\delta(\mathcal{A},\rho, R)=\frac{\sum\limits_{i=1}^{|\mathcal{A}|}Vol\left(C_i(\rho) \cap B(R)\right)}{4\pi R^3/3}.
\end{equation}

What we intend to prove is that the lines ${l_i}$ can be chosen is such a way that no two cylinders in $\{C_i(\rho)\}$ are parallel, and the local density
$\delta(\mathcal{A},\rho, R)$ is not too much smaller than the local density of the parallel packing, $\delta^{\perp}(\mathcal{A},r,R)$.
This will be made precise in the sequel.

We start with a

\begin{lemma}\label{lemma1} Let $r>0, R>0$ be two fixed positive reals. Consider $A_1(x_1,y_1,0)$ and $A_2(x_2,y_2,0)$, two distinct points in the $xy$-plane such that $\|A_1A_2\|\ge 2r$. Let $O(0,0,0)$ be the origin, and denote $\|OA_1\|=d_1$, $\|OA_2\|=d_2$. Assume that $0<d_1\le R$, and $0<d_2\le R$.  For $i=1, 2$, let $l_i$ be the line passing through $A_i$ and having direction vector $\mathbf{v}_i=\langle y_i, -x_i, T \rangle$ where $T$ is chosen such that $8r^2T\ge R^4$. Then the lines $l_1$ and $l_2$ are not parallel, and the distance between them is at least
\begin{equation}\label{yejinjohn}
dist(l_1,l_2)\ge 2r\left(1-\frac{1}{T}\right)
\end{equation}
\end{lemma}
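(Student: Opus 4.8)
The plan is to invoke the classical formula for the distance between two skew lines in $\mathbf{R}^3$ and then estimate the resulting expression using the three hypotheses $\|A_1A_2\|\ge 2r$, $d_1,d_2\le R$, and $8r^2T\ge R^4$. \emph{Non-parallelism first:} since $8r^2T\ge R^4>0$ we have $T>0$, so if $\mathbf{v}_1=\lambda\mathbf{v}_2$ then comparing third coordinates forces $\lambda=1$, whence $x_1=x_2$ and $y_1=y_2$, i.e. $A_1=A_2$, contradicting $\|A_1A_2\|\ge 2r>0$. In particular $\mathbf{v}_1\times\mathbf{v}_2\ne\mathbf{0}$, so the skew-line distance formula
\[
dist(l_1,l_2)=\frac{\bigl|\overrightarrow{A_1A_2}\cdot(\mathbf{v}_1\times\mathbf{v}_2)\bigr|}{\|\mathbf{v}_1\times\mathbf{v}_2\|}
\]
is legitimately available.

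Next comes the one genuine computation. I would compute $\mathbf{v}_1\times\mathbf{v}_2=\langle T(x_2-x_1),\,T(y_2-y_1),\,x_1y_2-x_2y_1\rangle$ and note that $\overrightarrow{A_1A_2}=\langle x_2-x_1,\,y_2-y_1,\,0\rangle$ has vanishing third coordinate, so that $\overrightarrow{A_1A_2}\cdot(\mathbf{v}_1\times\mathbf{v}_2)=T\|A_1A_2\|^2$ while $\|\mathbf{v}_1\times\mathbf{v}_2\|^2=T^2\|A_1A_2\|^2+(x_1y_2-x_2y_1)^2$. Substituting gives the closed form
\[
dist(l_1,l_2)=\frac{T\|A_1A_2\|^2}{\sqrt{T^2\|A_1A_2\|^2+(x_1y_2-x_2y_1)^2}}=\frac{\|A_1A_2\|}{\sqrt{1+u}},\qquad u:=\frac{(x_1y_2-x_2y_1)^2}{T^2\|A_1A_2\|^2}\ge 0 .
\]
In passing, the numerator $T\|A_1A_2\|^2\ge T(2r)^2>0$ is strictly positive, which confirms that $l_1$ and $l_2$ are actually skew, not merely non-parallel.

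Finally the estimate. Since $|x_1y_2-x_2y_1|=\|\overrightarrow{OA_1}\times\overrightarrow{OA_2}\|\le d_1d_2\le R^2$ and $\|A_1A_2\|\ge 2r$, the elementary inequality $1/\sqrt{1+u}\ge 1-u/2$ for $u\ge 0$ (which holds because $u\mapsto 1/\sqrt{1+u}-1+u/2$ vanishes at $u=0$ and has nonnegative derivative $\tfrac12\bigl(1-(1+u)^{-3/2}\bigr)$) yields
\[
dist(l_1,l_2)\ \ge\ \|A_1A_2\|-\frac{(x_1y_2-x_2y_1)^2}{2T^2\|A_1A_2\|}\ \ge\ 2r-\frac{R^4}{4rT^2}\ \ge\ 2r-\frac{2r}{T}=2r\Bigl(1-\frac1T\Bigr),
\]
where the last inequality is exactly the hypothesis $8r^2T\ge R^4$. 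The only point requiring care is bookkeeping with constants: the cruder bound $1/\sqrt{1+u}\ge 1-u$ would force the stronger requirement $R^4\le 4r^2T$ and fall short of the stated hypothesis, so the refinement $1/\sqrt{1+u}\ge 1-u/2$ is precisely what makes the constant $8$ sufficient. Everything else is a direct computation.
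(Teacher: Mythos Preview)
Your proof is correct and follows essentially the same route as the paper: both arguments use the skew-line distance formula, compute the numerator as $T\|A_1A_2\|^2$ and the denominator squared as $T^2\|A_1A_2\|^2+(x_1y_2-x_2y_1)^2$ (the paper writes the last term as $(1-c^2)d_1^2d_2^2$, which is the same thing), and then bound using $|x_1y_2-x_2y_1|\le R^2$, $\|A_1A_2\|\ge 2r$, and $R^4\le 8r^2T$. The only cosmetic difference is in the final algebra: the paper bounds $1/dist^2$ via $(1+1/T)^2\ge 1+2/T$, whereas you bound $dist$ directly via $1/\sqrt{1+u}\ge 1-u/2$; these are equivalent rearrangements of the same estimate.
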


\begin{proof}
Note first that the angles formed by the lines $l_1, l_2$ with the $xy$ plane are $\arctan(T/d_1)$ and $\arctan(T/d_2)$, respectively. So, if $d_1\neq d_2$, the lines cannot be parallel.

If $d_1=d_2$, then $\|\mathbf{v}_1\|=\|\mathbf{v}_2\|$, and under the assumption that $l_1\parallel l_2$, we immediately obtain that $\mathbf{v}_1=\mathbf{v}_2$. But this then leads to $A_1=A_2$, contradiction. Hence, the lines cannot be parallel.

We denote by $c$, the cosine of the angle determined by the vectors $\overrightarrow{OA_1}$ and $\overrightarrow{OA_2}: c=\cos(\angle A_1OA_2)$.
Note that
\begin{equation*}
c=\frac{\overrightarrow{OA_1}\cdot \overrightarrow{OA_2}}{\|OA_1\|\|OA_2\|}=\frac{x_1x_2+y_1y_2}{d_1d_2}\quad \text{from which}\quad  x_1x_2+y_1y_2= c\,d_1d_2.
\end{equation*}
Also, by the law of cosines
\begin{equation*}
\|A_1A_2\|^2= d_1^2+d_2^2-2d_1d_2\cos(\angle A_1OA_2)= d_1^2+d_2^2-2cd_1d_2.
\end{equation*}
It is well known that the distance between two skew lines can be computed with the formula
\begin{equation}\label{dist1}
dist(l_1,l_2)=\frac{|\overrightarrow{A_1A_2}\cdot(\mathbf{v}_1 \times\mathbf{v}_2)|}{\|\mathbf{v}_1 \times \mathbf{v}_2\|}.
\end{equation}
We will express the numerator and denominator separately. For the numerator, we have
\begin{align}
\overrightarrow{A_1A_2}\cdot(\mathbf{v}_1 \times\mathbf{v}_2)&=
\begin{vmatrix}
x_2-x_1 & y_2-y_1 & 0 \\
y_1 &  -x_1& T \\
y_2 & -x_2 & T
\end{vmatrix}
=-T
\begin{vmatrix}
x_2-x_1 & y_2-y_1 \\
y_2 & -x_2
\end{vmatrix}+
T
\begin{vmatrix}
x_2-x_1 & y_2-y_1 \\
y_1 & -x_1
\end{vmatrix}=\notag\\
&= T\left( x_1^2+y_1^2+x_2^2+y_2^2-2x_1x_2-2y_1y_2\right)= T(d_1^2+d_2^2 - 2\overrightarrow{OA_1}\cdot \overrightarrow{OA_2}) =\notag\\
&= T\left(d_1^2+d_2^2-2d_1d_2\,\cos{\angle A_1OA_2}\right)= T\,\|A_1A_2\|^2.\label{top1}
\end{align}

Computing the denominator, it follows that
\begin{align}
\lVert\mathbf{v}_1\times\mathbf{v}_2\rVert^2
&=\lVert\mathbf{v}_1\rVert^2\lVert\mathbf{v}_2\rVert^2
-\|\mathbf{v}_1\cdot\ \mathbf{v}_2\|^2=\notag\\
&= (d_1^2 + T^2)(d_2^2 + T^2)-(x_1x_2 + y_1y_2 + T^2)^2=\notag\\
&=(d_1^2 + T^2)(d_2^2 + T^2)-(c\,d_1d_2 + T^2)^2=\notag\\
&= d_1^2d_2^2 + d_1^2T^2 + d_2^2T^2 + T^4 - c^2d_1^2d_2^2 - 2cd_1d_2T^2 -T^4 =\notag\\
&= (d_1^2 + d_2^2 - 2cd_1d_2)T^2 + (1- c^2)d_1^2d_2^2)\notag=\\
&= T^2\|A_1A_2\|^2+(1-c^2)d_1^2d_2^2.\label{bot1}
\end{align}
Using \eqref{top1} and \eqref{bot1} into \eqref{dist1}, we obtain that
\begin{align*}
\frac{1}{dist(l_1,l_2)^2}&=\dfrac{\|\mathbf{v}_1 \times\mathbf{v}_2\|^2}{\|A_1A_2\cdot(\mathbf{v}_1\times\mathbf{v}_2)\|^2}
=\frac{T^2\|A_1A_2\|^2+(1-c^2)d_1^2d_2^2}{T^2\,\|A_1A_2\|^4}\notag\\
&=\frac{1}{\|A_1A_2\|^2}+\frac{(1-c^2)d_1^2d_2^2}{T^2\,\|A_1A_2\|^4}
\leq \frac{1}{\|A_1A_2\|^2}+\frac{R^4}{T^2\,\|A_1A_2\|^4}\le \notag\\
&\le \frac{1}{4r^2}+\frac{8r^2T}{16r^4T^2}\le \frac{1}{4r^2}\left(1+\frac{1}{T}\right)^2,
\end{align*}
from which
\begin{equation*}
dist(l_1,l_2)\ge \frac{2r}{1+1/T}\ge 2r\left(1-\frac{1}{T}\right), \quad \text{as claimed.}
\end{equation*}
\end{proof}

We can now prove our first result.
\begin{thm}\label{thmfinite}
Let $R, r$ be positive real numbers, and let $\mathcal{A}$ be a point set in the $xy$ plane, such that the distance between any two points in $\mathcal{A}$ is at least $2r$, and
$\mathcal{A}$ is contained in the ball of radius $R$ centered at the origin. Then, for every $0<\epsilon\le 8r^2/R^4$, there exists a collection of congruent cylinders $\{C_i\}$, all of radius $r(1-\epsilon)$,  with the following properties:
\begin{align*}
&\text{(a) the axis of}\,\, C_i \,\, \text{passes through}\,\, A_i\in \mathcal{A}.\\
&\text{(b) no two cylinders are parallel.}\\
&\text{(c) every two cylinders have disjoint interiors.}\\
&{\text(d)} \,\, \delta(\mathcal{A},r(1-\epsilon),R)=\delta^{\perp}(\mathcal{A},r(1-\epsilon),R).
\end{align*}
\end{thm}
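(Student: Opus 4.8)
The plan is to apply Lemma~\ref{lemma1} to every pair of points of $\mathcal{A}$ with one uniform choice of the parameter $T$. First I would set $T=1/\epsilon$; since $0<\epsilon\le 8r^2/R^4$ this yields $8r^2T=8r^2/\epsilon\ge R^4$, so the hypothesis of Lemma~\ref{lemma1} is met with this $T$ (the condition $0<\|OA_i\|\le R$ for $A_i\neq O$ follows from $\mathcal{A}\subseteq B(R)$, and $\mathcal{A}$ is automatically finite, being a $2r$-separated subset of $B(R)$, so all sums below are finite). Then for each $A_i=(x_i,y_i,0)\in\mathcal{A}$ with $A_i\neq O$ I would let $l_i$ be the line through $A_i$ with direction vector $\langle y_i,-x_i,T\rangle$ and take $C_i$ to be the cylinder of radius $r(1-\epsilon)$ about $l_i$; if $O\in\mathcal{A}$, I would use the $z$-axis for $l_O$ and the corresponding vertical cylinder of radius $r(1-\epsilon)$. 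Property (a) is then built into the construction.

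For (b) and (c): Lemma~\ref{lemma1} gives, for any two distinct points $A_i,A_j\in\mathcal{A}$ both different from $O$, that $l_i$ and $l_j$ are not parallel and $dist(l_i,l_j)\ge 2r(1-1/T)=2r(1-\epsilon)$, which is exactly twice the common radius, so the interiors of $C_i$ and $C_j$ are disjoint. The finitely many pairs involving $O$ (when $O\in\mathcal{A}$) I would dispatch by a one-line computation from \eqref{dist1}, giving $dist(l_O,l_j)=\|OA_j\|\ge 2r>2r(1-\epsilon)$ and obvious non-parallelism. Hence $\{C_i\}$ is a packing of pairwise non-parallel cylinders.

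The substantive step is (d). Here I would invoke the remark made just before Lemma~\ref{lemma1}: for $A_i\neq O$ the line $l_i$ is the image of the vertical line $l_i^{\perp}$ under the rotation $\sigma_i$ about the axis $OA_i$ by the angle $\arctan(\|OA_i\|/T)$ (the angle sending the vertical direction to the direction of $\langle y_i,-x_i,T\rangle$), so that $C_i=\sigma_i(C_i^{\perp}(r(1-\epsilon)))$. The one observation that genuinely has to be seen is that $\sigma_i$, being a rotation about a line through the origin, is an isometry fixing $O$, hence $\sigma_i(B(R))=B(R)$; therefore
\begin{equation*}
Vol(C_i\cap B(R))=Vol(C_i^{\perp}(r(1-\epsilon))\cap B(R))
\end{equation*}
for every $i$ (trivially so when $A_i=O$, since then $l_i=l_i^{\perp}$). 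Summing over $i$ and dividing by $4\pi R^3/3$ gives $\delta(\mathcal{A},r(1-\epsilon),R)=\delta^{\perp}(\mathcal{A},r(1-\epsilon),R)$, which is (d).

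I do not expect a serious obstacle: Lemma~\ref{lemma1} already carries all the analytic weight, so the work here is mostly assembly. The only point requiring a moment's thought is exactly this last one — that tilting the $i$-th cylinder by $\sigma_i$ costs nothing in terms of local density — and it works precisely because $B(R)$ is centered at the origin and the rotation axis $OA_i$ passes through it. Everything else (the choice $T=1/\epsilon$, the verification $8r^2T\ge R^4$, and the bookkeeping for a possible point at the origin) is routine.
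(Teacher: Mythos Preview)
Your proof is correct and follows essentially the same approach as the paper: set $T=1/\epsilon$, invoke Lemma~\ref{lemma1} for (b) and (c), and deduce (d) from the fact that $C_i$ and $C_i^{\perp}$ differ by a rotation about $OA_i$, which fixes $B(R)$. You are in fact slightly more careful than the paper in handling the possibility $O\in\mathcal{A}$ and in making explicit why the rotation preserves $B(R)$.
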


\begin{proof}
Use Lemma \ref{lemma1} with $T=1/\epsilon$. For each $A_i(x_i,y_i,0)\in \mathcal{A}$, consider the line $l_i$,  which
passes through $A_i$ and has direction vector $\mathbf{v}_i=\langle y_i,-x_i,1/\epsilon\rangle$. Also, recall that $l_i^{\perp}$ is the line
passing through $A_i$ and has direction vector $\langle 0,0,1\rangle$. Note that both $l_i$ and $l_i^{\perp}$ are perpendicular to $OA_i$.

Let $C_i$ and $C_i^{\perp}$ be the cylinders of radius $r(1-\epsilon)$ having axes $l_i$ and $l_i^{\perp}$, respectively.
Parts (b) and (c) follow immediately from Lemma \ref{lemma1}.
For proving part (d), note that $C_i \cap B(R)$  and $C_i^\perp \cap B(R)$ are congruent as  solids in $\mathbf{R}^3$,  as they can be obtained from each other via a rotation having $OA_i$ as its axis. Since both $\{C_i\}$ and $\{C_i^{\perp}\}$ are packings, it follows that
\begin{equation*}
\sum_{i=1}^{|\mathcal{A}|} Vol(C_i \cap B(R))= \sum_{i=1}^{|\mathcal{A}|} Vol(C_i^{\perp} \cap B(R))
\end{equation*}
Dividing both sides by the volume of $B(R)$, part (d) follows.
\end{proof}

 Theorem \ref{thmfinite} states that for given $R$, and $r$ positive numbers, if one starts with a packing of parallel cylinders of radius $r$, then one can first replace each of these cylinders with a cylinder of radius $r(1-\epsilon)$ and then rotate these thinner cylinders slightly such that no two of the perturbed cylinders are parallel; the resulting arrangement is still a packing, and the local density with respect to $B(R)$ is the same as the local density of the packing with thinner parallel cylinders.

Note that in our construction, the direction vectors of the axes of the nonparallel cylinders depend on $R$, the radius of the
circle centered at the origin that contains the set $\mathcal{A}$. Indeed, $\epsilon$ must be no greater than $8r^2/R^4$ for the construction to work. Hence, the implicit assumption is that set $\mathcal{A}$ is bounded.

If we want to produce a cylinder packing with high \underline{global density}, the set $\mathcal{A}$ has to be unbounded, and therefore, we must use a different approach.
We will present such a construction in the next section.

\section{{\bf An infinite packing with density $1/2$}}

Let $D(R)$ be the disk in $\mathbf{R}^2$ of radius $R$ and centered at the origin.

\begin{df}
Let $\mathcal{A}$ be a set of points in the plane. The \emph{point density} of $\mathcal{A}$ is defined as
\begin{equation}
\delta(\mathcal{A}, R)=\lim_{R\rightarrow\infty} \frac{|\mathcal{A} \cap D(R)|}{\pi R^2},
\end{equation}
provided the limit exists.
\end{df}

We next consider a special point set in the $xy$ plane, which we will also denote by $\mathcal{A}$.

For every positive integer $d\ge 32$, let $m$ be the unique integer such that $2^m\le d <2^{m+1}$. Define
\begin{equation*}
\theta= \frac{2\pi}{6\cdot 2^m}=\frac{\pi}{3\cdot 2^m}, \quad \text{and} \quad \mathcal{A}_d=\left\{(d\cos{k\theta}, d\sin{k\theta},0)\,: \, k=1,2,\ldots, 6\cdot 2^m\right\}.
\end{equation*}
Finally, set
\begin{equation}\label{defA}
\mathcal{A}=\bigcup_{d=32}^{\infty} \mathcal{A}_d.
\end{equation}
\begin{figure}[ht]
\centering
\includegraphics[width=0.47\linewidth]{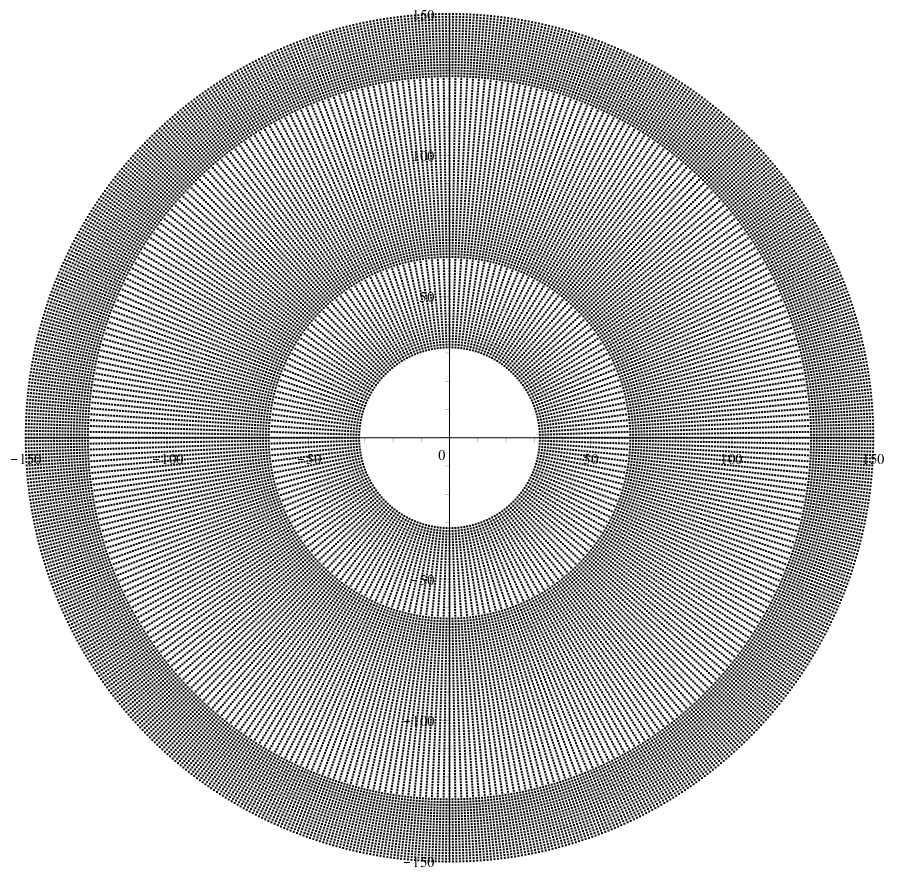}
\caption{$\bigcup\limits_{d=32}^{150}\mathcal{A}_d$ as a subset of the set $\mathcal{A}$ }
\label{fig1label}
\end{figure}
\begin{lemma}\label{2overpi}
The distance between any two distinct points of $\mathcal{A}$ is at least $1$, and its point density, regarded as a subset of $\mathbf{R}^2$, equals $2/\pi$.
\end{lemma}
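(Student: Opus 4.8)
The plan is to prove the two assertions separately, in each case organizing the computation around the dyadic block structure of $\mathcal{A}$: for every $m\ge 5$, all the circles $\mathcal{A}_d$ with $2^{m}\le d<2^{m+1}$ carry the same number $6\cdot 2^{m}$ of equally spaced points, and there are exactly $2^{m}$ integers $d$ in that range.

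\emph{Minimum distance.} I would split into two cases according to whether the two points lie on circles of equal or of different radii. If $P\in\mathcal{A}_{d_1}$ and $Q\in\mathcal{A}_{d_2}$ with $d_1\ne d_2$, then $d_1,d_2\in\mathbf{Z}$ gives $|d_1-d_2|\ge 1$, and the triangle inequality yields $\|P-Q\|\ge\big|\,\|P\|-\|Q\|\,\big|=|d_1-d_2|\ge 1$. If instead $P\ne Q$ both lie on one circle $\mathcal{A}_d$ with $2^{m}\le d<2^{m+1}$, the smallest possible value of $\|P-Q\|$ is the shortest chord length $2d\sin(\theta/2)=2d\sin\!\big(\tfrac{\pi}{6\cdot 2^{m}}\big)$. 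Since $d\ge 32$ forces $m\ge 5$, the argument $x=\tfrac{\pi}{6\cdot 2^{m}}\le\tfrac{\pi}{192}$ is small; using $d\ge 2^{m}$ together with the elementary inequality $\sin x\ge x-x^{3}/6$ gives $2d\sin\!\big(\tfrac{\pi}{6\cdot 2^{m}}\big)\ge\tfrac{\pi}{3}\big(1-\tfrac{\pi^{2}}{6^{3}\cdot 4^{m}}\big)>1$ (in fact the bound is essentially $\pi/3$). Both cases give distance at least $1$.

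\emph{Point density.} I would evaluate $|\mathcal{A}\cap D(R)|=\sum_{d=32}^{\lfloor R\rfloor}|\mathcal{A}_d|=\sum_{d=32}^{\lfloor R\rfloor}6\cdot 2^{\lfloor\log_2 d\rfloor}$ by summing over complete dyadic blocks. The block with parameter $m$ contributes $2^{m}\cdot 6\cdot 2^{m}=6\cdot 4^{m}$, so with $M$ defined by $2^{M}\le R<2^{M+1}$ the blocks $m=5,\dots,M-1$ contribute $\sum_{m=5}^{M-1}6\cdot 4^{m}=2\cdot 4^{M}-2\cdot 4^{5}$ by the geometric series formula, and the outermost block adds $\big(\lfloor R\rfloor-2^{M}+1\big)\cdot 6\cdot 2^{M}$. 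The reason the answer is $2/\pi$ is that a \emph{complete} block with parameter $m$ places $6\cdot 4^{m}$ points into the annulus $2^{m}\le|x|<2^{m+1}$, whose area is $\pi(4^{m+1}-4^{m})=3\pi\cdot 4^{m}$, i.e. with density exactly $2/\pi$; equivalently, over a complete block the step function $d\mapsto 6\cdot 2^{\lfloor\log_2 d\rfloor}$ has the same sum as $d\mapsto 4d$, the count corresponding to constant density $2/\pi$. After dividing by $\pi R^{2}$ one is left to take $R\to\infty$.

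The step I expect to be the main obstacle is the control of the single outermost, possibly incomplete, block: within one dyadic block the running count grows only linearly in $d$ while $\pi R^{2}$ grows quadratically, so as $R$ sweeps across a block the ratio is not pinned at $2/\pi$, and one must argue that the net effect of the incomplete block is negligible in the limit. Concretely this amounts to comparing $\big(\lfloor R\rfloor-2^{M}+1\big)\cdot 6\cdot 2^{M}$ with $2\big(R^{2}-4^{M}\big)$; I would use the crude bounds $2^{M}\le R<2^{M+1}\le 2R$ and $4^{M}\le R^{2}$ to carry this out, and it is exactly here that one must check that the fluctuation within each block genuinely washes out, so that the limit defining the point density exists and equals $2/\pi$. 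Granting the requisite estimate, $|\mathcal{A}\cap D(R)|=2R^{2}+o(R^{2})$ and the point density equals $2/\pi$.
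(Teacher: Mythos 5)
Your treatment of the minimum distance is correct and is essentially the paper's own argument: the case $d_1\neq d_2$ follows from integrality of the radii plus the triangle inequality, and for two points on the same circle the shortest chord $2d\sin\left(\frac{\pi}{6\cdot 2^m}\right)\ge 2^{m+1}\sin\left(\frac{\pi}{6\cdot 2^m}\right)$ exceeds $1$; the paper gets this by noting that $x\sin\left(\frac{\pi}{3x}\right)$ is increasing and already equals $1$ at $x=2$, while your Taylor bound does the same job.

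The density half has a genuine gap, and it is exactly the one you flagged --- except that the ``requisite estimate'' you defer is not merely delicate, it is false. Take $R=\tfrac{4}{3}\cdot 2^{M}$: the complete blocks contribute $2\cdot 4^{M}-2^{11}$ points, and the incomplete outermost block contributes about $\tfrac{1}{3}2^{M}\cdot 6\cdot 2^{M}=2\cdot 4^{M}$ more, for a total of roughly $4\cdot 4^{M}=\tfrac{9}{4}R^{2}$, so that
\begin{equation*}
\frac{\bigl|\mathcal{A}\cap D\left(\tfrac{4}{3}2^{M}\right)\bigr|}{\pi\left(\tfrac{4}{3}2^{M}\right)^{2}}\longrightarrow \frac{9}{4\pi}\neq\frac{2}{\pi}.
\end{equation*}
More generally, for $R=t\cdot 2^{M}$ with $1\le t<2$ the ratio tends to $(6t-4)/(\pi t^{2})$, whose values sweep out $[2/\pi,\,9/(4\pi)]$; hence $|\mathcal{A}\cap D(R)|$ is \emph{not} $2R^{2}+o(R^{2})$, the fluctuation within a block does not wash out, and the limit over all real $R$ does not exist. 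The paper's proof sidesteps this entirely: it counts exactly at $R=2^{m}$ only, getting $2^{2m+1}+6\cdot 2^{m}-2^{11}$, and takes the limit along that subsequence, where the ratio does tend to $2/\pi$ (this is the liminf of the ratio, which is what matters for a lower bound on the packing density). To complete your argument you should do the same --- restrict to $R=2^{m}$, or reinterpret the ``density'' as a liminf --- rather than try to control the incomplete block for arbitrary $R$; as written, your final sentence cannot be justified.
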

\begin{proof}
Let $A_1,A_2$ be two distinct points in $\mathcal{A}$. Denote $d_1=\|OA_1\|$, $d_2=\|OA_2\|$. If $d_1\neq d_2$ then $\|A_1A_2\|\ge |d_2-d_1|\ge 1$ follows from the triangle inequality and the fact that $d_1$ and $d_2$ are integers.

If $d_1=d_2$, then $\angle A_1OA_2\ge (\pi/3\cdot 2^m)$, where $m$ is the positive integer for which $2^m\le d_1=d_2<2^{m+1}$. It follows that
\begin{equation*}
\|A_1A_2\|\ge 2d_1\sin\left(\frac{\pi}{3\cdot 2^{m+1}}\right)\ge 2^{m+1}\sin\left(\frac{\pi}{3\cdot 2^{m+1}}\right)\ge 1,
\end{equation*}
since the function $x\sin\left(\frac{\pi}{3x}\right)$ is strictly increasing, and already for $x=2$ it takes value $1$.

Let $m\ge 5$ be an integer and let $D(2^m)$ be the disk with center $O$ and radius $2^m$. The cardinality of the set $\mathcal{A} \cap D(2^m)$ is equal to
\begin{equation*}
6\cdot 32^2+6\cdot 64^2+\ldots +6\cdot (2^{m-1})^2+6\cdot 2^m= 6\cdot(2^{10}+2^{12}+\ldots+2^{2m-2}+2^m)=2^{2m+1}+6\cdot 2^m-2^{11}.
\end{equation*}
Hence, the point density of $\mathcal{A}$ equals
\begin{equation*}
\lim_{m\rightarrow \infty} \frac{2^{2m+1}+6\cdot 2^m-2^{11}}{\pi\cdot 2^{2m}}=\frac{2}{\pi}.
\end{equation*}
\end{proof}

\subsection{\bf The construction.}

Fix a real constant  $L\ge 6$ and define $K=\sqrt{L^2-1}$. Note that
\begin{equation}\label{LK}
\frac{L^2}{K^2}<1.03 \quad \text{and} \quad K<L<2K.
\end{equation}
The exact value of $L$ is not important as long as inequalities \eqref{LK} are satisfied. The reason for these choices of $L$ and $K$ is going to become clear
soon.

For a given point $A_i(x_i,y_i, 0)\in \mathcal{A}$ we denote $d_i= \|OA_i\| =\sqrt{x_i^2+y_i^2}$. Let $l_i^{\perp}$ be the line that passes through $A_i$ and is orthogonal to the $xy$ plane.

We associate with $A_i$ another line, $l_i$, in the following way:
\begin{itemize}
\item{the line $l_i$ passes through $A_i$}
\item{the direction of $l_i$ is given by the vector $\mathbf{v}_i=\langle y_i, -x_i, Kd_i+L\rangle.$}
\end{itemize}
The selection of the direction vectors is the critical part of the construction. Let us look at our choice for $\mathbf{v}_i$ a bit more closely.

First, notice that $\mathbf{v}_i\cdot \overrightarrow{OA_i} =0$, so $l_i$ is obtained by rotating $l_i^{\perp}$ around $A_i$ in the plane that passes through $A_i$ and is orthogonal to $\overrightarrow{OA_i}$.

Second, let $\beta(d_i)$ be the tangent function of the angle formed by the line $l_i$ with the $xy$ plane. Then
\begin{equation}\label{betadef}
\beta(d_i)= \frac{Kd_i+L}{\sqrt{x_i^2+y_i^2}}=\frac{Kd_i+L}{d_i}=K+\frac{L}{d_i}.
\end{equation}
Hence, $\beta(d_i)$ gets smaller as $d_i$ increases. In particular, for any two distinct points $A_i, A_j \in \mathcal{A}$ such that $d_i<d_j$ we have $\beta(d_i)>\beta(d_j)$. This means that lines $l_i$ and $l_j$ cannot be parallel. Also, if $A_i$ and $A_j$ are two distinct points in $\mathcal{A}$ with $d_i=d_j$, then it is still impossible for $l_i$ and $l_j$ to be parallel.

Indeed, in this case $\|\mathbf{v}_i\|=\|\mathbf{v}_j\|$, which implies $\mathbf{v}_i=\mathbf{v}_j$, that is, $\langle y_i,-x_i, Kd_i+L\rangle=\langle y_j,-x_j, Kd_j+L\rangle$. But this forces $A_i=A_j$, contradiction.

At this point we have the lines $l_i$, no two of which are parallel. These lines are going to be the axes of the cylinders in the packing we are planning to construct. The delicate part is to prove a lower bound for the distance between any two such lines.

Our choice of the set $\mathcal{A}$ is very similar to those in Kuperberg \cite{KK}, and Graf and Paukowitsch \cite{GP} papers, although our set has higher point density.
Also, in both those papers, $\beta(d)$ is a strictly decreasing function of $d$.

Kuperberg takes $\beta : [2,\infty)\rightarrow (1,\sqrt{3}\,]$, to be some strictly decreasing function of $d$, such that $\beta(2)=\sqrt{3}$, and $\lim\limits_{d\rightarrow \infty}\beta(d)=1$. So the lines intersecting the $xy$ plane close to the origin form angles of about $60^{\circ}$ with the $xy$ plane, and this angle decreases to about $45^{\circ}$ for lines intersecting the $xy$ plane far from the origin.

Next, Kuperberg proves that the distance between any two lines is at least $\pi/8\sqrt{3}=0.226\ldots$, and shows that this eventually leads to a cylinder packing of density $\pi^2/576=0.0171\ldots$.

Graf and Paukowitsch select their $\beta(d)=2.0896/\arctan(d)$, for $d\ge 1$. For this explicit choice of $\beta(d)$, they prove that the distance between any two lines is at least $1$, which is optimal, and then present a cylinder packing with density $5/12= 0.4166\ldots$.

While this result is much better than Kuperberg's original estimate, their choice of the function $\beta(d)$ leads to some awkward computations, which require heavy use of complicated Taylor series estimates. On the other hand, our definition \eqref{betadef} of $\beta(d)$ generates much simpler algebra and eventually helps us produce a better result.

We  will need the following result later on.
\begin{lemma}\label{lemmacos}
Let $L\ge 6$ be a fixed real number, and let $K=\sqrt{L^2-1}$. Then for every $x\ge 32$, the following inequality holds true
\begin{equation}
1-\cos\left(\frac{\pi}{3x}\right)\ge 1.03\cdot\frac{(x+1)^2}{2x^4}>\frac{L^2}{K^2}\cdot\frac{(x+1)^2}{2x^4}.
\end{equation}
\end{lemma}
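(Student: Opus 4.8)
The plan is to prove the sharper bound $1-\cos(\pi/(3x)) \ge 1.03\cdot(x+1)^2/(2x^4)$ for all $x\ge 32$, since the second inequality in the statement is immediate: we have $L^2/K^2 = L^2/(L^2-1) < 1.03$ exactly when $L^2-1 > L^2/1.03$, i.e. $0.03\,L^2 > 1.03$, i.e. $L^2 > 34.33\ldots$, which holds since $L\ge 6$ gives $L^2\ge 36$. So only the first inequality requires real work.

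First I would reduce to a comparison between two explicit one-variable functions. Using the standard bound $1-\cos t \ge t^2/2 - t^4/24$ (from the alternating Taylor series, valid for all real $t$, with the tail having the sign of the first omitted term), it suffices to show
\begin{equation*}
\frac{t^2}{2} - \frac{t^4}{24} \ge 1.03\cdot\frac{(x+1)^2}{2x^4}, \qquad t = \frac{\pi}{3x}.
\end{equation*}
Multiplying through, $t^2/2 = \pi^2/(18 x^2)$ and $t^4/24 = \pi^4/(1944\, x^4)$, so after multiplying by $2x^4$ the claim becomes
\begin{equation*}
\frac{\pi^2}{9} x^2 - \frac{\pi^4}{972} \ge 1.03\,(x+1)^2.
\end{equation*}
Since $\pi^2/9 = 1.0966\ldots > 1.03$, the leading coefficients already favor the left side; the remaining task is to absorb the cross term $2\cdot 1.03\, x$ and the constant $1.03 - \pi^4/972$ into the quadratic slack $(\pi^2/9 - 1.03)x^2 \approx 0.0666\, x^2$. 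Concretely I would write the difference LHS $-$ RHS as $a x^2 - b x - c$ with $a = \pi^2/9 - 1.03 > 0.066$, $b = 2.06$, $c = 1.03 + \pi^4/972 > 0$, and check that this quadratic in $x$ is positive for $x\ge 32$: its larger root is at most $(b + \sqrt{b^2 + 4ac})/(2a)$, and a crude numeric estimate ($b\approx 2.06$, $a\approx 0.066$, $c\approx 1.13$) puts this root well below $32$, so monotonicity of the quadratic past its vertex finishes it. A clean way to present this without decimals is to verify the inequality at $x=32$ directly and then show the derivative of LHS $-$ RHS is nonnegative for $x\ge 32$, i.e. $(2\pi^2/9)x \ge 2.06$, which is obvious.

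The main obstacle, such as it is, is purely bookkeeping: making sure the Taylor lower bound $1-\cos t \ge t^2/2 - t^4/24$ is applied with the correct direction of inequality (it is a genuine lower bound for every $t$, which is exactly what we need here), and tracking the rational-versus-decimal constants so the margin $\pi^2/9 - 1.03$ is visibly positive. There is no analytic subtlety — the point of choosing $\beta(d)$ as in \eqref{betadef} is precisely that this estimate collapses to an elementary quadratic inequality — so I would keep the write-up short: state the Taylor bound, substitute $t=\pi/(3x)$, clear denominators, and observe that $(\pi^2/9 - 1.03)x^2$ dominates the lower-order terms for $x\ge 32$.
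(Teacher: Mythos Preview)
Your argument is correct and takes a genuinely different route from the paper. The paper rewrites the first inequality as $\phi(x)\ge 1.03$ for
\[
\phi(x)=\frac{2x^4}{(x+1)^2}\Bigl(1-\cos\tfrac{\pi}{3x}\Bigr),
\]
asserts (without details) that $\phi$ is increasing on $[32,\infty)$ with limit $\pi^2/9$, and then simply evaluates $\phi(32)=1.03017\ldots$; the second inequality is dispatched exactly as you do, via $L^2/K^2<1.03$ for $L\ge 6$. Your approach instead replaces $1-\cos t$ by the explicit Taylor lower bound $t^2/2-t^4/24$ and reduces everything to the polynomial inequality $(\pi^2/9-1.03)x^2-2.06x-(1.03+\pi^4/972)\ge 0$. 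The paper's version is shorter on the page but hides the work inside ``$\phi$ is increasing''; your version is fully elementary and self-contained, at the cost of carrying a few decimal constants.

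Two small caveats worth tightening in a final write-up. First, the larger root of your quadratic is approximately $31.46$, so ``well below $32$'' overstates the margin --- the inequality really is borderline at $x=32$ (this is exactly why the paper starts the set $\mathcal{A}$ at $d=32$), and you should record the root estimate carefully rather than wave at it. Second, in your alternative ``check at $x=32$ plus derivative'' sketch, the derivative of $\mathrm{LHS}-\mathrm{RHS}$ is $(2\pi^2/9)x-2.06(x+1)$, not $(2\pi^2/9)x-2.06$; the correct positivity condition is $x\ge 2.06/(2\pi^2/9-2.06)\approx 15.5$, which of course still holds for $x\ge 32$, but the line as written is a slip.
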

\begin{proof}
Consider the function
\begin{equation}
\phi(x)= \frac{2x^4}{(x+1)^2}\cdot \left(1-\cos\left(\frac{\pi}{3x}\right)\right).
\end{equation}
It is easy to show that $\phi(x)$ is increasing for $x\ge 32$, and that $\lim\limits_{x\rightarrow\infty} \phi(x)=\pi^2/9=1.0966\ldots$.\\
Substituting $x=32$, we get $\phi(32)=1.03017\ldots$, as claimed. The second inequality is an immediate consequence of \eqref{LK}.
\end{proof}
\subsection{\bf A lower bound for the distance between two lines}
We will next study the most critical part of our construction, namely providing a lower estimate for the distance between any two lines.
\vspace{-0.3cm}
\begin{lemma}\label{mainlemma}
Let $A_1=(x_1, y_1, 0)$ and $A_2=(x_2, y_2, 0)$ be two distinct points in $\mathcal{A}$ as defined in \eqref{defA}. For $i=1, 2$, let $d_i=\sqrt{x_i^2+y_i^2}$ and $\mathbf{v}_i= \langle y_i, -x_i, Kd_i+L\rangle$, with $L$, $K$ satisfying \eqref{LK}. For $i=1, 2$, let $l_i= A_i + t \mathbf{v}_i$ be the line passing through point $A_i$ and having direction vector $\mathbf{v}_i$.   Then
\begin{equation}\label{dist}
dist(l_1, l_2)\ge 1.
\end{equation}
\end{lemma}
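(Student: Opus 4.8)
The plan is to apply the skew--line distance formula \eqref{dist1} of Lemma \ref{lemma1}, now with the non--constant third coordinates $Kd_i+L$ in place of $T$, and then to settle the resulting algebraic inequality by a short case analysis fed by Lemma \ref{lemmacos}. Put $c=\cos\angle A_1OA_2$, $p=d_1d_2$, $s=d_1+d_2$, $w=(d_1-d_2)^2$, $u=p(1-c)$ and $Q=Ks+2L$. No two of the lines $l_i$ are parallel, so $l_1$ and $l_2$ are skew and \eqref{dist1} applies. Expanding the numerator determinant as in \eqref{top1} gives
\[
\overrightarrow{A_1A_2}\cdot(\mathbf{v}_1\times\mathbf{v}_2)=Kd_1d_2(d_1+d_2)(1-c)+L\|A_1A_2\|^2=Qu+Lw,
\]
using $\|A_1A_2\|^2=w+2u$. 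For the denominator one expands $\|\mathbf{v}_1\times\mathbf{v}_2\|^2=\|\mathbf{v}_1\|^2\|\mathbf{v}_2\|^2-(\mathbf{v}_1\cdot\mathbf{v}_2)^2$, as in \eqref{bot1}, with $\|\mathbf{v}_i\|^2=d_i^2+(Kd_i+L)^2$ and $\mathbf{v}_1\cdot\mathbf{v}_2=cd_1d_2+(Kd_1+L)(Kd_2+L)$; invoking the identity $K^2+1=L^2$ (which is exactly why $K=\sqrt{L^2-1}$, cf.\ \eqref{LK}) this collapses to $\|\mathbf{v}_1\times\mathbf{v}_2\|^2=-u^2+2uL^2(p-1)+2uLQ+L^2w$.

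Hence \eqref{dist} is equivalent to $(Qu+Lw)^2\ge\|\mathbf{v}_1\times\mathbf{v}_2\|^2$ (note $Qu+Lw>0$), which after cancellation reads
\[
u^2(Q^2+1)+2LQ\,u(w-1)+L^2w(w-1)\ \ge\ 2uL^2(p-1);
\]
call this $(\star)$. I would prove $(\star)$ in three cases. \emph{Equal radii.} If $d_1=d_2=:d$ then $w=0$ and $(\star)$ becomes $u(Q^2+1)\ge 2L^2(d^2-1)+2LQ$. Since $A_1,A_2$ lie on $\mathcal A_d$, we have $\angle A_1OA_2\ge\pi/(3\cdot2^m)\ge\pi/(3d)$ (as $2^m\le d$), so $1-c\ge 1-\cos(\pi/(3d))$ and Lemma \ref{lemmacos} gives $u=d^2(1-c)\ge\frac{L^2(d+1)^2}{2K^2d^2}$. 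As $Q^2+1>4K^2d^2$ this yields $u(Q^2+1)>2L^2(d+1)^2$, and a one--line expansion reduces the goal to $2L^2(d+1)^2\ge 2L^2(d^2-1)+2LQ$, i.e.\ to $L\ge K$, which holds.

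\emph{Unequal radii.} Take $d_1<d_2$, so $d_2\ge 33$ and $w\ge 1$. If $A_1,A_2$ lie on a common ray through $O$ (i.e.\ $c=1$) then $u=0$ and $(\star)$ is just $L^2w(w-1)\ge 0$. Otherwise the angular coordinates of $A_1$ and $A_2$ are integer multiples of $\pi/(3\cdot 2^{m_2})$, where $2^{m_2}\le d_2<2^{m_2+1}$ (here $m_1\le m_2$ since $d_1<d_2$), so $\angle A_1OA_2\ge\pi/(3\cdot2^{m_2})\ge\pi/(3d_2)$ and Lemma \ref{lemmacos} gives $u=d_1d_2(1-c)\ge u_0:=\frac{L^2d_1(d_2+1)^2}{2K^2d_2^{3}}$. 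Writing the left side of $(\star)$ as $g(u)+L^2w(w-1)$ with $g(u)=u\bigl[u(Q^2+1)+2LQ(w-1)-2L^2(p-1)\bigr]$ an upward parabola in $u$ with roots $0$ and $u^{\ast}=\frac{2L^2(p-1)-2LQ(w-1)}{Q^2+1}$, it is enough to verify $u_0\ge u^{\ast}$, since then every $u\ge u_0$ has $u\ge\max(0,u^{\ast})$ and so $g(u)\ge 0$. The inequality $u_0\ge u^{\ast}$ amounts to $u_0(Q^2+1)+2LQ(w-1)\ge 2L^2(p-1)$, and using $Q^2+1>4K^2p$ and $4K^2p\,u_0=\frac{2L^2d_1^2(d_2+1)^2}{d_2^2}$ it reduces to $\frac{Ld_1^2(d_2+1)^2}{d_2^2}+Q(w-1)\ge L(p-1)$. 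For $d_2=d_1+1$ the term $Q(w-1)$ drops out and, clearing denominators, this becomes $d_1^3+2d_1^2+d_1+1\ge 0$. For $d_2-d_1=k\ge 2$ one uses $k^2-1\ge\tfrac{3}{2}k$, $Q>2Kd_1$ and $3K>L$ (both from \eqref{LK}) to get $Q(w-1)>3Kd_1k>Ld_1k$, while $\frac{Ld_1^2(d_2+1)^2}{d_2^2}>Ld_1^2$, whence the left side exceeds $Ld_1^2+Ld_1k=Ld_1d_2>L(p-1)$.

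The delicate step is the case $d_2=d_1+1$ above: there the leading terms of the two sides of $(\star)$ cancel, and the inequality survives only because one keeps the full factor $Q^2+1$ (equivalently $4K^2p$) together with the sharp constant $1.03$ of Lemma \ref{lemmacos} rather than cruder estimates — this is exactly what forces the choices $K=\sqrt{L^2-1}$ and of $\beta$ in \eqref{betadef}. The only other genuinely structural input is the combinatorial fact, built into the definition \eqref{defA} of $\mathcal A$, that two non--collinear points of $\mathcal A$ subtend an angle at $O$ of at least $\pi/(3\cdot 2^{m})$ with $2^m\le\max(d_1,d_2)$, which is what permits Lemma \ref{lemmacos} to be applied with the right argument.
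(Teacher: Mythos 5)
Your proposal is correct, and its skeleton coincides with the paper's: the same skew-line distance formula, the same two boxed expressions for the numerator $Qu+Lw$ and for $\|\mathbf{v}_1\times\mathbf{v}_2\|^2$ (your identity matches \eqref{top} and \eqref{bottom} exactly after the substitution $u=d_1d_2(1-c)$, $w=(d_1-d_2)^2$), the same use of integrality of the $d_i$ to discard the term $L^2w(w-1)\ge 0$, the same reduction to a statement linear in $1-c$ (your observation that $g(u)\ge 0$ once $u\ge\max(0,u^{\ast})$ is just a repackaging of the paper's step of dividing by $2(1-c)d_1d_2$ to form $\Delta'$ and then substituting the lower bound for $1-c$), and the same angular-separation input via Lemma \ref{lemmacos} applied at $x=d_2$. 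Where you genuinely diverge is the endgame: the paper sets $d_2=d_1+n$, clears the denominator $4(d_1+n)^3$, and verifies that every coefficient of the resulting quartic in $d_1$ is nonnegative for the specific numerical choice $L=7$, $K=4\sqrt{3}$; you instead split into the cases $d_2=d_1$, $d_2=d_1+1$, and $d_2-d_1\ge 2$, using only the inequalities $K<L<2K$ and the bound $Q^2+1>4K^2d_1d_2$. Your version is shorter, isolates the genuinely tight case $d_2=d_1+1$ (where the paper's polynomial check hides the near-cancellation), and works uniformly for all $L\ge 6$ rather than for one numerical instance; the paper's version is more mechanical but requires no case discussion. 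Both are complete proofs; the only cosmetic gap in yours is the unstated remark that the formula \eqref{distl1l2} returns $0$ for intersecting nonparallel lines, so proving the numerator dominates the denominator in absolute value also rules out intersection — a point the paper likewise leaves implicit.
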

\begin{proof}
Note the result above is optimal, as there are infinitely many pairs of points $A_1$, $A_2$ in $\mathcal{A}$ with $\|A_1A_2\|=1$.
A key role in the sequel is played by the cosine of the angle determined by the vectors $\overrightarrow{OA_1}$ and $\overrightarrow{OA_2}$. Thus we introduce
the following notation:
\begin{equation*}
c=\cos(\angle A_1OA_2).
\end{equation*}
Note that
\begin{equation}\label{cd1d2}
c=\frac{\overrightarrow{OA_1}\cdot \overrightarrow{OA_2}}{\|OA_1\|\|OA_2\|}=\frac{x_1x_2+y_1y_2}{d_1d_2}\quad \text{from which}\quad  x_1x_2+y_1y_2= c\,d_1d_2.
\end{equation}
Again, we use the formula that gives the distance between two nonparallel lines in space:
\begin{equation}\label{distl1l2}
dist(l_1,l_2)=\frac{|\overrightarrow{A_1A_2}\cdot(\mathbf{v}_1 \times\mathbf{v}_2)|}{\|\mathbf{v}_1 \times \mathbf{v}_2\|}.
\end{equation}
As before, we will compute the numerator and denominator separately. For the numerator, we have
\begin{align*}
\overrightarrow{A_1A_2}\cdot(\mathbf{v}_1 \times\mathbf{v}_2)&=
\begin{vmatrix}
x_2-x_1 & y_2-y_1 & 0 \\
y_1 &  -x_1& Kd_1+L \\
y_2 & -x_2 & Kd_2+L
\end{vmatrix}=\\
&=-(Kd_1+L)
\begin{vmatrix}
x_2-x_1 & y_2-y_1 \\
y_2 & -x_2
\end{vmatrix}+
(Kd_2+L)
\begin{vmatrix}
x_2-x_1 & y_2-y_1 \\
y_1 & -x_1
\end{vmatrix}=\\
&= (Kd_1+L)\left( x_2^2+y_2^2-x_1x_2-y_1y_2\right)+(Kd_2+L)\left( x_1^2+y_1^2-x_1x_2-y_1y_2\right)=\\
&= (Kd_1+L)(d_2^2-c\,d_1d_2)+(Kd_2+L)(d_1^2-c\,d_1d_2).\notag\\
\end{align*}
After rearranging with respect to the powers of $1-c$ we finally obtain
\begin{equation}\label{top}
\boxed{\overrightarrow{A_1A_2}\cdot(\mathbf{v}_1 \times\mathbf{v}_2)= (1-c)d_1d_2(Kd_1+Kd_2+2L)+L(d_2-d_1)^2}.
\end{equation}
As for the denominator of \eqref{distl1l2}, similar computations give that
\begin{align*}
\|\mathbf{v}_1 \times \mathbf{v}_2\|^2 &= \|\mathbf{v}_1\|^2 \|\mathbf{v}_2\|^2- (\mathbf{v}_1\cdot \mathbf{v}_2)^2 = \\
&= \left(y_1^2 + x_1^2 + (Kd_1 + L)^2\right)\left(y_2^2 + x_2^2 + (Kd_2 + L)^2\right) - \left(y_1y_2 + x_1x_2 + (Kd_1 + L)(Kd_2 + L)\right)^2 = \\
&= \left(d_1^2 + (Kd_1 + L)^2\right)\left(d_2^2 + (Kd_2 + L)^2\right) - \left(c\:d_1d_2 + (Kd_1 + L)(Kd_2 + L)\right)^2 = \\
&= d_1^2d_2^2 + d_1^2(Kd_2 + L)^2 + d_2^2 (Kd_1 + L)^2 - c^2 d_1^2d_2^2 - 2c\:d_1d_2(Kd_1 + L)(Kd_2 + L).
\end{align*}
After arranging with respect to the powers of $(1-c)$ we obtain
\begin{equation*}
\|\mathbf{v}_1 \times \mathbf{v}_2\|^2 = -(1-c)^2 d_1^2d_2^2+2(1-c)d_1d_2\left[(K^2+1)d_1d_2+L^2+KL(d_1+d_2)\right]+ L^2(d_2-d_1)^2,
\end{equation*}
and taking into account that $K^2+1=L^2$ we end up with
\begin{equation}\label{bottom}
\boxed{\|\mathbf{v}_1 \times \mathbf{v}_2\|^2 = -(1-c)^2 d_1^2d_2^2+2(1-c)d_1d_2\left[L^2(1+d_1d_2)+KL(d_1+d_2)\right]+ L^2(d_2-d_1)^2}
\end{equation}
Since we intend to show that $|\overrightarrow{A_1A_2}\cdot(\mathbf{v}_1 \times\mathbf{v}_2)|\ge \|\mathbf{v}_1 \times \mathbf{v}_2\|$ it is natural to consider the difference

\begin{equation*}
\boxed{\Delta:=|\overrightarrow{A_1A_2}\cdot(\mathbf{v}_1 \times\mathbf{v}_2)|^2 - \|\mathbf{v}_1 \times \mathbf{v}_2\|^2}
\end{equation*}
Using equalities \eqref{top} and \eqref{bottom} we obtain
\begin{align*}
\Delta&=(1-c)^2d_1^2d_2^2(Kd_1+Kd_2+2L)^2+2(1-c)d_1d_2(KLd_1+KLd_2+2L^2)(d_2-d_1)^2+L^2(d_2-d_1)^4+\\
&+(1-c)^2d_1^2d_2^2- 2(1-c)d_1d_2\left[L^2(1+d_1d_2)+KL(d_1+d_2)\right]- L^2(d_2-d_1)^2,
\end{align*}
which after some simplifications can be written as
\begin{empheq}[box=\widefbox]{align}\label{delta}
\Delta&=(1-c)^2d_1^2d_2^2\left[ 1+(Kd_1+Kd_2+2L)^2\right]+ L^2(d_2-d_1)^2\left[(d_2-d_1)^2-1\right]+\notag\\
&+2(1-c)d_1d_2\left[KL(d_1+d_2)((d_2-d_1)^2-1)+L^2(2d_2^2-5d_1d_2+2d_1^2-1)\right]
\end{empheq}
\begin{obs}\label{dinteger}
Note that by our selection of the set $\mathcal{A}$ we have that  $L^2(d_2-d_1)^2\left[(d_2-d_1)^2-1\right]\ge 0$.
This is indeed the case, since for any point $A_i\in \mathcal{A}$, the distance $\|OA_i\|=d_i$ is a positive integer.
\end{obs}
Hence,  $L^2(d_2-d_1)^2\left[(d_2-d_1)^2-1\right]\ge 0$, and obviously $1+(Kd_1+Kd_2+2L)^2> K^2(d_1+d_2)^2$ and $1-c\ge 0$,
from which it immediately follows that
\begin{align*}
\Delta\ge (1-c)^2d_1^2d_2^2K^2(d_1+d_2)^2+2(1-c)d_1d_2\left[KL(d_1+d_2)((d_2-d_1)^2-1)+L^2(2d_2^2-5d_1d_2+2d_1^2-1)\right]
\end{align*}
The right-hand side becomes $0$ if $1-c=0$, and therefore in this case $\Delta\ge 0$, done.
Assume that $c<1$, and let $\Delta'$ be the quantity obtained after dividing the right hand term by $2(1-c)d_1d_2$:
\begin{equation}\label{Deltaprime}
\boxed{\Delta'= \frac{K^2}{2}(1-c)d_1d_2(d_1+d_2)^2+KL(d_1+d_2)((d_2-d_1)^2-1)+L^2(2d_2^2-5d_1d_2+2d_1^2-1)}
\end{equation}
We plan to prove that $\Delta'\ge 0$.\\
Clearly, $\Delta'$ is symmetric in respect to $d_1$ and $d_2$, so we can assume without loss of generality that $d_1\leq d_2$. We also know that there exists an $m\ge 5$ such that $2^m\leq d_2 <2^{m+1}$.

We will now prove the following statement:
\begin{equation}\label{ltp}
1-c\ge \frac{L^2}{K^2}\cdot \frac{(d_2+1)^2}{2d_2^4}.
\end{equation}
Since $2^m\le d_2 <2^{m+1}$ it follows by our choice of the set $\mathcal{A}$ that $\angle A_1OA_2\ge \pi/(3\cdot 2^m)\ge \pi/(3\cdot d_2)$, from which
\begin{equation*}
1-c\ge 1-\cos\left(\frac{\pi}{3\cdot 2^m}\right)\ge 1-\cos\left(\frac{\pi}{3\cdot d_2}\right) \ge \frac{L^2}{K^2}\cdot \frac{(d_2+1)^2}{2d_2^4},
\end{equation*}
where the second inequality is due to the fact that the function $1-\cos\left(\frac{1}{x}\right)$ is a decreasing function and the third inequality follows from Lemma \ref{lemmacos} with $x=d_2$, and
thus, \eqref{ltp} is true.\\
Looking at $\Delta'$ we can see that the first term is clearly non-negative and so we can plug in the lower bound we obtained in $(20)$ to get
\begin{equation*}
\Delta'\ge \frac{K^2}{2}\frac{L^2}{K^2}\cdot \frac{(d_2+1)^2}{2d_2^4}d_1d_2(d_1+d_2)^2+KL(d_1+d_2)((d_2-d_1)^2-1)+L^2(2d_2^2-5d_1d_2+2d_1^2-1).
\end{equation*}
We know that $d_1\leq d_2$ and since these are integers there exists a nonnegative integer $n$ so that $d_2=d_1+n$. Next, we substitute this expression into our inequality and simplify to obtain the following lower bound for $\Delta'$:
\begin{equation}
\frac{L^2(d_1+n+1)^2d_1(2d_1+n)^2}{4(d_1+n)^3}+KL(2d_1+n)(n^2-1)+L^2(2d_1^2-5d_1(d_1+n)+2(d_1+n)^2-1).
\end{equation}
Clearly, if we show that $(21)$ is nonnegative, then so is $\Delta'$. Multiply $(21)$ by $4(d_1+n)^3$ as that does not change the sign of the expression. After factoring and tidying up the expression we are left with a quartic polynomial in $d_1$:
\begin{align*}
&(8Kn^2-4Ln-8K+8L)d_1^4+(28Kn^3-3Ln^2-28Kn+16Ln)d_1^3+\\
+&(36Kn^4+14Ln^3-36Kn^2+10Ln^2-8Ln)d_1^2+(20Kn^5+21Ln^4-20Kn^3+2Ln^3-11Ln^2)d_1+\\
+&(4Kn^6+8Ln^5-4Kn^4-4Ln^3).
\end{align*}
To make things simpler we give numerical values to $K$ and $L$. Taking $L=7$ and $K=\sqrt{49-1}=4\sqrt{3}$ it would suffice to prove that the coefficients of the above polynomial are nonnegative for all $n\ge 0$. 

Indeed, we have

\begin{align*}
&32\sqrt{3}n^2-32\sqrt{3}-28n+56=n(32\sqrt{3}n-28)+(56-32\sqrt{3})\ge 0,\\
&112\sqrt{3}n^3-112\sqrt{3}n-21n^2+112n=n(112\sqrt{3}n(n-1)+(112\sqrt{3}-21)(n-1)+91)\ge 0, \\
&144\sqrt{3}n^4-144\sqrt{3}n^2+98n^3+70n^2-56n=2n(72\sqrt{3}n(n^2-1) + 49n^2 +(35 n - 28))\ge 0,\\
&80\sqrt{3}n^5-80\sqrt{3}n^3+147n^4+14n^3-77n^2=n^2(80\sqrt{3}n(n^2-1)+(147n^2-77)+14n)\ge 0,\\
&16\sqrt{3}n^6-16\sqrt{3}n^4+56n^5-28n^3=n^3(16\sqrt{3}n(n^2-1)+(56n^2-28)) \ge 0.
\end{align*}
Now it is clear that the whole expression is nonnegative. Thus $\Delta'\ge0$ and the proof is complete.
\end{proof}
\newpage
We are now ready to prove the main result of this paper.
\begin{thm}
There exists a cylinder packing $\mathcal{C}$  whose global density is $1/2$, and no two cylinders in $\mathcal{C}$ are parallel.
\end{thm}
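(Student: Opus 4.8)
The plan is to take $\mathcal{C}=\{C_i\}$ to be the family of cylinders of radius $1/2$ whose axes are the lines $l_i=A_i+t\mathbf{v}_i$, with $\mathbf{v}_i=\langle y_i,-x_i,Kd_i+L\rangle$ and $A_i\in\mathcal{A}$, defined earlier in this section (say with $L=7$, $K=4\sqrt{3}$, which satisfy \eqref{LK}). By Lemma~\ref{mainlemma} the distance between any two of these axes is at least $1=2\cdot\frac12$, so the cylinders $C_i$ have pairwise disjoint interiors and $\mathcal{C}$ is a cylinder packing; as was observed right after equation~\eqref{betadef}, no two of the lines $l_i$ are parallel, hence no two cylinders of $\mathcal{C}$ are parallel. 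It remains only to show that the global density of $\mathcal{C}$ equals $1/2$.

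For the density, the first step is to reuse the rotation argument from Theorem~\ref{thmfinite}(d). For each $i$ let $C_i^{\perp}$ be the vertical cylinder of radius $1/2$ with axis $l_i^{\perp}$, and let $D_i$ be the closed disk of radius $1/2$ centered at $A_i$ in the $xy$-plane. Since the points of $\mathcal{A}$ are pairwise at distance at least $1$ (Lemma~\ref{2overpi}), the disks $D_i$ are pairwise non-overlapping, so $\{C_i^{\perp}\}$ is a packing as well. Because $\mathbf{v}_i\cdot\overrightarrow{OA_i}=0$, the line $l_i$ is obtained from $l_i^{\perp}$ by a rotation $\rho_i$ whose axis is the line $OA_i$; this axis passes through $O$, so $\rho_i\bigl(B(R)\bigr)=B(R)$, and therefore $\rho_i\bigl(C_i^{\perp}\cap B(R)\bigr)=C_i\cap B(R)$ for every $R$, whence these two solids have equal volume. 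Summing over $i$ and dividing by $Vol(B(R))$ gives $\delta(\mathcal{C},R)=\delta^{\perp}(\mathcal{A},1/2,R)$ for every $R>0$, so it suffices to prove $\delta^{\perp}(\mathcal{A},1/2,R)\to 1/2$.

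To evaluate $\delta^{\perp}(\mathcal{A},1/2,R)$ I would slice $B(R)$ by the horizontal planes $z=\text{const}$. Since the $C_i^{\perp}$ are vertical, the cross-section of $\bigcup_i C_i^{\perp}$ at height $z$ is the planar region $\bigcup_i D_i$, and its area inside the slicing disk $D\bigl(\sqrt{R^2-z^2}\bigr)$ equals $\frac{\pi}{4}\bigl|\mathcal{A}\cap D\bigl(\sqrt{R^2-z^2}\bigr)\bigr|$ up to an error of order $\sqrt{R^2-z^2}$, coming from the disks $D_i$ that straddle the bounding circle (of which there are $O\bigl(\sqrt{R^2-z^2}\bigr)$, since the packing condition forces $\mathcal{A}$ to have density bounded by an absolute constant). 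Integrating over $z\in[-R,R]$ and applying Fubini,
\[
\sum_i Vol\bigl(C_i^{\perp}\cap B(R)\bigr)=\frac{\pi}{4}\int_{-R}^{R}\Bigl|\mathcal{A}\cap D\bigl(\sqrt{R^2-z^2}\bigr)\Bigr|\,dz+O(R^2).
\]
By Lemma~\ref{2overpi} the point density of $\mathcal{A}$ is $2/\pi$, that is $\bigl|\mathcal{A}\cap D(r)\bigr|=2r^2+o(r^2)$, so the integral equals $\int_{-R}^{R}2(R^2-z^2)\,dz+o(R^3)=\frac{8}{3}R^3+o(R^3)$. Hence $\sum_i Vol(C_i^{\perp}\cap B(R))=\frac{\pi}{4}\cdot\frac{8}{3}R^3+o(R^3)=\frac{2\pi}{3}R^3+o(R^3)$, and dividing by $Vol(B(R))=\frac{4}{3}\pi R^3$ gives $\delta^{\perp}(\mathcal{A},1/2,R)\to\frac12$. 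Combined with the identity of the previous paragraph, this shows $\delta(\mathcal{C})=1/2$ and proves the theorem.

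The step I expect to demand the most care is making the slicing estimate uniform. One must simultaneously dispose of the disks $D_i$ straddling the boundary of each slicing disk, of the bounded central region $d<32$ where $\mathcal{A}$ is empty, and --- most delicately --- of the polar zones where $|z|$ is close to $R$ and the slicing disks are too small for the asymptotic form of Lemma~\ref{2overpi} to be meaningful; on those zones one uses instead only the crude bound $\bigl|\mathcal{A}\cap D(r)\bigr|=O(r^2)$ and verifies that their combined contribution is a vanishing fraction of $Vol(B(R))$. All of this is routine, the genuinely hard input --- the lower bound $dist(l_i,l_j)\ge 1$ --- having already been secured in Lemma~\ref{mainlemma}.
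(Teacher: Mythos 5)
Your proposal is correct and follows essentially the same route as the paper: invoke Lemma~\ref{mainlemma} for the packing property and non-parallelism, use the rotation about $OA_i$ (which fixes $B(R)$) to transfer the density to the parallel packing $\{C_i^{\perp}\}$, and then combine the point density $2/\pi$ from Lemma~\ref{2overpi} with the cross-sectional area $\pi/4$ to get $1/2$. The only difference is that you carry out the slicing/Fubini computation and the boundary and polar-cap error estimates explicitly, where the paper simply asserts that the density of the parallel packing is the product of the point density and the cross-sectional area; your added detail is sound.
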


\begin{proof}
Consider the set $\mathcal{A}$ as defined in \eqref{defA}. For each point $A_i(x_i,y_i,0)\in \mathcal{A}$, define two lines as follows:
\begin{align*}
l_i^\perp &= A_i + \langle 0,0,1\rangle t \quad \text{and}\\
l_i&=A_i+\langle y_i,-x_i, Kd_i+L\rangle t,
\end{align*}
where $d_i=\sqrt{x_i^2+y_i^2}$, $L=7$, and $K=4\sqrt{3}$.

Let $C_i^{\perp}$ and $C_i$ be the cylinders of radius $1/2$ whose axes are $l_i^{\perp}$ and $l_i$, respectively.
Then, clearly $\{C_i^{\perp}\}$ is a packing with parallel cylinders, while by Lemma \ref{mainlemma} $\{C_i\}$ is also a cylinder packing, but no two cylinders are parallel to each other. Moreover, since for every $R>0$ we have that $B(R)\cap C_i^{\perp}$ and $B(R)\cap C_i$ are congruent, it follows that
\begin{equation*}
\delta(\{C_i^{\perp}\},R)=\delta(\{C_i\},R),
\end{equation*}
and after letting $R\longrightarrow \infty$, the global densities of the two cylinder packings are also equal to each other.
\begin{equation}
\delta(\{C_i^{\perp}\})=\delta(\{C_i\}),
\end{equation}

But it is easy to see that the density of $\{C_i^{\perp}\}$ can be expressed as the product of the point density of $\mathcal{A}$ (which by Lemma \ref{2overpi} equals $2/\pi$) and the area of the perpendicular cross-section of a cylinder in $\{C_i^{\perp}\}$. Hence, the global density of $\{C_i^{\perp}\}$ equals
\begin{equation}
\delta(\{C_i^{\perp}\})=\frac{2}{\pi}\cdot \pi\left(\frac{1}{2}\right)^2=\frac{1}{2}.
\end{equation}

Combining the last two equalities, we obtain the desired result.

\end{proof}

\section{\bf{Conclusions and directions for future study}}

We constructed a cylinder packing with global density $1/2$, with no two cylinders being parallel. The natural question is whether we can do better.

In fact, K. Kuperberg \cite{KK} conjectured that there exist such packings with density arbitrarily close to $\pi/\sqrt{12}$.
In light of our result in Theorem \ref{thmfinite}, this may very well be the case.

The weak part of our proof is the choice of the point set $\mathcal{A}$; this set has point density of only $2/\pi=0.6366\ldots$.  In comparison, the point set
\begin{equation}
\Lambda=\{(m/2, n\sqrt{3}/2): m+n \,\,\text{even},\,\ m, n \,\,\text{integers}\}
\end{equation}
has point density $2/\sqrt{3}=1.1647\ldots$, and the minimum distance between any two points of $\Lambda$ is still equal to $1$.
Of course, $\Lambda$ is the unit equilateral triangle lattice; hence by Thue's result, it is the densest planar point set with this property.

So why not use $\Lambda$ instead of $\mathcal{A}$? The answer is simple: for every point $A_i\in \mathcal{A}$, the distance $d_i=\|OA_i\|$ is an integer.
This is not the case with the lattice $\Lambda$.

We used this fact in estimating the quantity $\Delta$ defined in \eqref{delta}; see also observation \ref{dinteger}. To be precise,
the term $L^2(d_2-d_1)^2\left[(d_2-d_1)^2-1\right]$ is always nonnegative if $d_i$ are all integers. This allowed us to ignore this term altogether
and, in the process, reduce the problem to a \emph{linear} inequality in $1-c$; see the step that lead to the definition of $\Delta'$ in \eqref{Deltaprime}.

But of course, if we choose $\Lambda$ instead of $\mathcal{A}$, $L^2(d_2-d_1)^2\left[(d_2-d_1)^2-1\right]$ could very well be negative. And then, showing that $\Delta\ge 0$ becomes
a \emph{quadratic} inequality in $(1-c)$. Not only is this inequality more difficult to work with, but several computer experiments indicated that it may not even be true.

One final remark regarding our cylinder packing.

The angle formed by the line $l_i$ with the $xy$ plane is $\arctan(K+L/d_i)$. This means that lines that intersect the $xy$ plane closest to the origin have slopes of
$K+L/32$, while the lines that intersect the $xy$ plane far from the origin have slopes close to $K$. Once $L$ (and therefore $K$) is fixed, this creates a cone having $Oz$ as its axis, which is completely free of any cylinders. In other words, there are arbitrarily large holes in our packing. We measure the size of the hole by the radius of the largest sphere
that can fit in that region.

One interesting question is whether a nonparallel cylinder packing exists, all of its holes being of bounded size.

\end{document}